\documentclass[10pt,twocolumn,twoside]{IEEEtran}
\usepackage{amsmath,amssymb,amsthm}
\usepackage{mathtools}
\usepackage{graphicx}
\usepackage{xcolor}
\usepackage{enumitem}
\usepackage{tikz}
\usetikzlibrary{decorations.pathreplacing}
\usetikzlibrary{positioning}

\usepackage{datetime}
\usepackage{nicematrix}

\definecolor{LucaColor}{RGB}{0,90,180}
\definecolor{CinziaColor}{RGB}{180,60,0}
\definecolor{AlessandroColor}{RGB}{120,0,160}

\newtheorem{theorem}{Theorem}

\theoremstyle{remark}
\newtheorem{remark}{Remark}
\theoremstyle{definition}
\newtheorem{definition}{Definition}
\newtheorem{example}[theorem]{Example}

\newcommand{\rank}{\operatorname{rank}}
\newcommand{\R}{\mathbb{R}}
\newcommand{\C}{\mathbb{C}}
\newcommand{\Rn}{\R^n}

\newcommand{\RNN}{\R^{N\times N}}
\newcommand{\ones}{\mathbf{1}}
\def \S {\mathcal S}

\renewcommand{\vec}[1]{{\boldsymbol{#1}}}
\newcommand{\bmat}[1]{\begin{bmatrix}#1\end{bmatrix}}
\newcommand{\smat}[1]{ \left[\begin{smallmatrix} #1 \end{smallmatrix}\right]}
\def \x {{\vec{x}}}
\def \f {{\vec{f}}}
\def \F {{\vec{F}}}

\def \z {{\vec{z}}}
\def \e {{\vec{e}}}
\def \y {{\vec{y}}}
\def \v {{\vec{v}}}

\newcommand{\re}{\operatorname{Re}}

\newcommand{\svdots}{%
  \vbox{%
    \baselineskip=1.5pt
    \lineskiplimit=0pt
    \kern1.5pt
    \hbox{$\scriptscriptstyle .$}%
    \hbox{$\scriptscriptstyle .$}%
    \hbox{$\scriptscriptstyle .$}%
  }%
}

\newcommand{\sdots}{{\scriptscriptstyle\dots}}

\newcommand{\sddots}{%
  \mathinner{%
    \raise3pt\vbox{\hbox{$\scriptscriptstyle .$}}%
    \raise1.5pt\hbox{$\scriptscriptstyle .$}%
    \hbox{$\scriptscriptstyle .$}%
  }%
}

\title{On the Optimal Laplacian Jordan Structure for Synchronizability}
\author{Luca Dieci, Cinzia Elia, and Alessandro Pugliese%
\thanks{Corresponding author: Luca Dieci.
L. Dieci is with the School of Mathematics, Georgia Institute of
Technology, Atlanta, GA 30332, USA (e-mail: dieci@math.gatech.edu).
C. Elia and A. Pugliese are with the Department of Mathematics,
University of Bari Aldo Moro, 70125 Bari, Italy (e-mail:
cinzia.elia@uniba.it; alessandro.pugliese@uniba.it).}}

\begin{document}

\maketitle


\begin{abstract}
In this work, for a network of differential equations with a prescribed graph structure, our goal is to show how to select the Laplacian of the network in order to obtain the most favorable outcome insofar as {\sl synchronizability}.  That is, we will want to: (i) guarantee asymptotic stability of a synchronous solution (as measured by a negative value of the master stability function), (ii) minimize the normalized spread of the Laplacian eigenvalues, and (iii) have a transient as short as possible.  Within the class of tridiagonal Laplacians, we give both necessary and sufficient conditions for satisfying our three criteria above, and give extension to banded Laplacians as well.  Finally, we give extensive numerical results to elucidate our theoretical results and to compare to existing works.
\end{abstract}
\begin{IEEEkeywords}
Networks of Autonomous Agents, Nonlinear Systems, Master Stability Function, Synchronization, 
Jordan Structure.  
\end{IEEEkeywords}
\thanks{This work has been partially supported by the Georgia Institute of Technology and the University of Bari Aldo Moro.}

\section{Introduction}\label{intro}
In this work we consider a network of $N$ agents, where each agent separately obeys the same differential equation, and the agents are coupled through linear diffusive coupling:
\begin{equation}\label{Network}
\begin{aligned}
\dot \x_i &= \f(\x_i)+\sum_{j=1}^N a_{ij}E(\x_j-\x_i),\\
&i=1,\dots,N,\qquad t\ge0.
\end{aligned}
\end{equation}
Here, $\x_i\in \Rn$, $A=(a_{ij})_{i,j=1, \ldots, N}$, is a weighted
adjacency matrix of a rooted directed graph (i.e., it has a spanning tree), and so $a_{ij}\ge 0$, $a_{ij}=0$ if node (agent) $i$ and $j$ are not connected, and further $a_{ii}=0$.  In \eqref{Network}, $E \in \R^{n \times n}$ is a matrix of $0$'s and $1$'s specifying which components interact with one another. \\
Let $L=\left(l_{ij}\right)_{i,j=1}^N \in \R^{N\times N}$ be the {\sl generalized Laplacian} matrix of the graph: $l_{ij}=-a_{ij}$, $i\ne j$, and $l_{ii}=-\sum_j l_{ij}$,
$i,j=1,\dots, N$.  Note that $L$ is not necessarily symmetric, but it has $0$ row-sum, hence it is singular, and since the diagonal entries of $L$ are nonnegative then the non-zero eigenvalues of $L$ have positive real parts, as an immediate consequence of Gerschgorin circle theorem.  Moreover, since the graph is a rooted digraph, $0$ is a simple eigenvalue and the eigenvalues of $L$  -not necessarily distinct- satisfy
$\lambda_1=0<\re(\lambda_i)$, $i=2,\dots, N$. \\
Now, let $\x=\begin{bmatrix} \x_1 \\ \vdots \\ \x_N \end{bmatrix}\in \R^{nN}$, 
$\F(\x)=\begin{bmatrix} \f(\x_1) \\ \vdots \\ \f(\x_N) \end{bmatrix}$, 
and rewrite \eqref{Network} as
\begin{equation}\label{Network2}
	\dot \x = \F(\x)-B \x ,\quad\text{where}\quad
	B=L\otimes E.	
\end{equation}
We note that if $\bar \x: \ t\ge 0 \mapsto \bar \x(t)\in \Rn$ is a solution of 
$\dot \z=\f(\z)$, then 
\eqref{Network2} always has the
so called {\sl synchronous solution} $\x_s=(\bar \x, \ldots, \bar \x)$.
However even if $\bar \x$ is (exponentially) asymptotically stable for the single agent, there is no guarantee of asymptotic stability of $\x_s$ for the full network.  A useful tool to ascertain this fact is the so-called {\sl Master Stability Function} (MSF), introduced with this name 
by Pecora and Carroll in \cite{PecoraCarroll}.
\subsection{The MSF}
Define the {\emph{synchronous subspace}} $\S$ by
\begin{equation}\label{SynchSub}
\begin{aligned}
\S=\{\x\in \R^{nN}:\;&\x_1=\x_2=\cdots=\x_N,\\
&\x_j\in\Rn,\quad j=1,\dots,N\}.
\end{aligned}
\end{equation}
Assume that the isolated agent has a local attractor, and let $\bar{x}$ be a typical trajectory on it. We further assume that the Lyapunov exponents along $\bar{x}$ are stable, 
a sufficient condition for which is that each Lyapunov exponent belongs to a distinct interval of the exponential dichotomy spectrum (see \cite{DV2002}), which describes the possible uniform exponential growth and decay rates of the linearized system.
  
Solutions of \eqref{Network2} in $\S$ in a neighborhood of $\x_s=(\bar \x, \ldots, \bar \x)$, converge to $N$ copies of a solution on the local attractor,
but this might not be the case for solutions that are transversal to $\S$. In order to study local stability of $\x_s$, we linearize \eqref{Network2} along $\x_s$ and obtain the following linear system 
\begin{equation*}
\dot\y=\bigl(I_N\otimes D\f(\bar\x)-B\bigr)\y.
\end{equation*}
where $\y=\smat{\y_1 \\ \vdots \\ \y_N}$. Let $V\in\C^{N\times N}$ give the Jordan form of $L$, so that $L=VJV^{-1}$, with
\begin{equation*}
J=\operatorname{diag}\bigl(0,J_2(\lambda_2),\ldots,J_p(\lambda_p)\bigr),
\end{equation*}
and
\begin{equation*}
J_i(\lambda_i)=\lambda_i I_{N_i}+J_{N_i}(0),
\qquad i=2,\ldots,p,
\end{equation*}
where $J_{N_i}(0)$ is the nilpotent Jordan block of size $N_i$ and $1+N_2+\cdots+N_p=N$.
Performing the change of variable $(V^{-1}\otimes I_n) \y \to \z$, one obtains the linear system 
\begin{equation}\label{Jordan1}
\dot \z=(I_N \otimes D\f(\bar \x) -J \otimes E) \z\ .
\end{equation}
Now, partition 
$$\z=\begin{bNiceMatrix}[first-row, first-col] \\
{\scriptstyle n}\{ & \z_1 \\ {\scriptstyle nN_2}\{ & \z_2 \\ & \vdots \\ {\scriptstyle nN_p}\{ & \z_p 
\end{bNiceMatrix}\,,$$
and so from \eqref{Jordan1} one gets $\dot \z_1=D\f(\bar \x)\z_1$ and 
\begin{equation}\label{Jordan}
\begin{aligned}
\dot \z_i={}&\bigl(I_{N_i}\otimes D\f(\bar\x)
-J_i(\lambda_i)\otimes E\bigr)\z_i,\\
&i=2,\ldots,p.
\end{aligned}
\end{equation}
In order to study the asymptotic stability of $\x_s$, from \eqref{Jordan1} we want to study the asymptotic stability of the origin for $\z$.  Hence, aside from the concerns of stability of $\bar \x$ for the single agent (hence of the origin in $\R^n$ for $\z_1$), 
one needs to infer asymptotic stability of the origin in $\R^{nN_i}$ for the $\z_i$'s in \eqref{Jordan}, $i=2,\dots, p$.  To this end, given the form of $J_i(\lambda_i)$, rewriting
$$\z_i=\begin{bNiceMatrix}[first-col]
{\scriptstyle n}\{ & \z_{i,1} \\
{\scriptstyle n}\{ & \z_{i,2} \\
                   & \vdots   \\
{\scriptstyle n}\{ & \z_{i,N_i},
\end{bNiceMatrix},\ i=2,\dots, p\,,$$ it is sufficient to look at
$$\dot \z_{i,N_i} = (D\f(\bar\x) - \lambda_i E) \z_{i,N_i}\ .$$
As a consequence, one can consider the single general parametrized linear system
\begin{equation}\label{Linear}
\dot \v = (D\f(\bar\x(t)) -\eta E)\v\ , \quad \eta \in \C,\, \re(\eta)> 0 .
\end{equation}
Denote with $\Lambda(\eta)$ the largest Lyapunov exponent of \eqref{Linear}: this is the Master Stability Function (MSF) of \cite{PecoraCarroll}. Let 
\begin{equation}\label{SynchR}
\mathcal R=\{ \eta \in \C,\, \re(\eta)> 0\,  | \ \Lambda(\eta) <0\}. 
\end{equation}
We call $\mathcal R$ the \emph{synchronization region}.  If $\lambda_i \in \mathcal R$ for all $i=2, \ldots, p$, then we say that $\x_s$ is transversally asymptotically stable for \eqref{Network2}. 

With this, we can pinpoint the first criterion for synchronizability: if $\mathcal R$ is not empty, the eigenvalues of $L$ must lead to a negative value of the MSF.  This has been recognized for a long time; e.g., see \cite{PecoraCarroll, Arenas, Universality}.  In particular, one may attempt choosing the graph structure (topology of the network) to force this feature, as done in \cite{TopoView, OptimTopo}.
Alternatively, if the graph structure is not negotiable, say it has to be that of nearest-neighbor interaction with all active arcs (tridiagonal, unreduced, Laplacian), then one must assign weights to the arcs in order to guarantee that $L$ has eigenvalues leading to a negative MSF, as we did in \cite{DiElPu1}; this point of view leads to non-symmetric (in particular, non-normal) Laplacians. 
\begin{remark}
An alternative to local stability theory based on the MSF can be given using global properties of the vector field $\f$ of the single agent. This point of view was adopted in \cite{DeLellis} 
for \eqref{Network} with $E=I$ using the QUAD property and leading to results guaranteeing synchronization. 
\end{remark}
\subsection{The transient}\label{Transient}
Although the asymptotic stability of \eqref{Jordan} is not affected by the size of the Jordan blocks $J_i$'s, its transient behavior is heavily affected by it, as we now show.  \\
Let $\lambda_i$ 
be such that the greatest Lyapunov exponent of \eqref{Linear} for $\eta=\lambda_i$ is negative. 
Write explicitly the ODE for the $\z_i$ in \eqref{Jordan}: 
\begin{equation}\label{SingleJblock}
	\left \{ \begin{array}{cc}
		\dot \z_{i,1}= & A_i(t)\z_{i,1} -E \z_{i,2} \\
		\dot \z_{i,2}= & A_i(t)\z_{i,2} -E \z_{i,3} \\
		\vdots  &  \vdots \\
		\dot \z_{i,N_i-1}= & A_i(t)\z_{i,N_i-1} -E \z_{i,N_i} \\
		\dot \z_{i,N_i}= & A_i(t)\z_{i,N_i} 
	\end{array} \right . ,
\end{equation}
where we have set $A_i(t)=(D\f(\bar\x(t))-\lambda_i E)$. Let $Z_i(t)$ be the principal matrix solution of 
$\dot Z_i=A_i(t)Z_i$, $Z_i(t) \in \C^{n \times n}$ 
and let $Z_i(t,s)=Z_i(t)Z_i(s)^{-1}$, $t \geq s \geq 0$. 
Since $\lambda_i\in \mathcal {R}$, then there exist $K>0$ and $\alpha>0$ such that $\|Z_i(t)\| \leq Ke^{-\alpha t}$, for $t \geq 0$, and in particular $\|\z_{i,N_i}\|\le Ke^{-\alpha t}\|\z_{i,N_i}(0)\|$. Assume moreover that the exponential dichotomy spectrum is contained in the negative real half-line, so that $Z_i(t)$ is uniformly asymptotically stable, i.e., $\|Z(t,s)\| \leq Ke^{-\alpha (t-s)}$, for $t \geq s \geq 0$. 
Using the variation of constants formula, we obtain 
\begin{equation*}
\begin{aligned}
\z_{i,N_i-1}(t)={}&Z_i(t)\z_{i,N_i-1}(0)\\
&-\int_0^t Z_i(t,s)E Z_i(s)\z_{i,N_i}(0)\,ds.
\end{aligned}
\end{equation*}
Now, we can bound $\|\z_{i,N_i-1}(t)\|$ as
\begin{equation*}
\begin{aligned}
\|\z_{i,N_i-1}(t)\|\le e^{-\alpha t}\bigl(&K\|\z_{i,N_i-1}(0)\|\\
&+K^2\|E\|\|\z_{i,N_i}(0)\|t\bigr).
\end{aligned}
\end{equation*}
Analogously, we obtain the bounds
\begin{equation*}
\begin{aligned}
\|\z_{i,N_i-2}(t)\|\le e^{-\alpha t}\biggl(&K\|\z_{i,N_i-2}(0)\|\\
&+K^2\|E\|\|\z_{i,N_i-1}(0)\|t\\
&+K^3\|E\|^2\|\z_{i,N_i}(0)\|\frac{t^2}{2}\biggr).
\end{aligned}
\end{equation*}
and eventually
\begin{equation}\label{TransGrowth}
\|\z_{i,1}(t)\|\le e^{-\alpha t}\sum_{j=0}^{N_i-1}K^{j+1}\|E\|^j
\|\z_{i,j+1}(0)\|\frac{t^j}{j!}.
\end{equation} 
It follows from \eqref{TransGrowth} that the transient behavior of $\z_i(t)$ is impacted by the dimension $N_i$ of $J_i(\lambda_i)$ through the dependency on the factor 
$\displaystyle{e^{-\alpha t}\sum_{j=1}^{N_i-1}K^{j+1} \|E\|^j \|\z_{i,j+1}(0)\| \frac{t^j}{j!}}$.  In Section 
\ref{LorSim}, we exemplify the practical impact of the size of the Jordan blocks on a network of coupled Lorenz systems.

Next, in Section \ref{TridLap} we show how to choose the Jordan form of the Laplacian (eigenvalues, and size of the Jordan blocks), for the case of ``tridiagonal'' Laplacians (nearest neighbor graphs), and in Section \ref{BandLap} we consider the case of banded Laplacians, in order to obtain as short a transient as possbile.
Finally, in Section \ref{NumSim} we highlight the practical impact of the above result by comparing our classes of Laplacians with other choices used in the literature.  We should stress that 
our results are in the spirit of our previous effort \cite{DiElPu1}, that is when we have a ``non-negotiable'' network structure, say tridiagonal or banded, as this is the case of most practical interest to us.   It would be a totally different story to consider having a preassigned number of arcs (but not their location) and assign them between nodes so to enhance synchronizability, as recently discussed in \cite{NM2010, LuUrschelLiu}.

\section{Tridiagonal Laplacians with $\lambda_2=\ldots=\lambda_N=1$}\label{TridLap}

A graph Laplacian matrix $L\in\R^{N\times N}$ is called a tridiagonal Laplacian matrix if it has the form
\begin{equation}\label{eq:laplacian}
\begin{gathered}
L=\bmat{
a_1 & b_2 & 0 & \cdots & 0\\
c_2 & a_2 & b_3 & \ddots & \vdots\\
0 & c_3 & a_3 & \ddots & 0\\
\vdots & \ddots & \ddots & \ddots & b_N\\
0 & \cdots & 0 & c_N & a_N},\\
b_i\le0,\quad c_i\le0,\quad c_i+a_i+b_{i+1}=0,\\
i=1,\dots,N.
\end{gathered}
\end{equation}
where we set $c_1=b_{N+1}=0$.

We already know that $L$ has a simple $0$ eigenvalue and all other eigenvalues of $L$ have nonnegative real part. Now we claim that all eigenvalues of $L$ are real. Indeed, recall that $L$ is irreducible if and only if
$$
b_i<0\quad\text{and}\quad c_i<0\qquad \text{for all } i=2,\dots,N.
$$
Since an irreducible tridiagonal matrix is diagonally similar to a real symmetric, irreducible, tridiagonal matrix, then an irreducible tridiagonal matrix $L$ has real and distinct eigenvalues. By continuity, it follows that any graph Laplacian $L$ as in \eqref{eq:laplacian} has, possibly repeated, nonnegative real eigenvalues: $0=\lambda_1< \lambda_2\le \cdots\le \lambda_N$.

In \cite{NM2010}, the authors introduce the normalized spread of the eigenvalues
\begin{equation}\label{eq:normalized_spread_eigenvalues}
\sigma^2\coloneqq \frac{1}{d^2(N-1)}\sum_{i=2}^N |\lambda_i-\bar{\lambda}|^2
\end{equation}
as a measure of synchronizability. Here $d$ is the average coupling strength per node, which in our notation, for \eqref{eq:laplacian}, is given by
\begin{equation}\label{eq:average_coupling_strength}
d\coloneqq -\frac{1}{N}\sum_{i=1}^N \sum_{\substack{j=1\\ j\neq i}}^N L_{ij}
=\frac{1}{N}\sum_{i=1}^N a_i,
\end{equation}
and $\bar{\lambda}$ is the average of the nonzero eigenvalues, namely
$$
\bar{\lambda}\coloneqq \frac{1}{N-1}\sum_{i=2}^N \lambda_i.
$$
We are interested in tridiagonal Laplacian matrices that minimize $\sigma^2$. Clearly, $\sigma^2\ge 0$, and $\sigma^2=0$ if and only if
\begin{equation}\label{MSpectrum}
\lambda_2=\cdots=\lambda_N.
\end{equation}
An equivalent measure of synchronizability which has often been advocated (e.g., see \cite{Ratio}), for Laplacians with real eigenvalues, is  to use the ratio $\lambda_N/\lambda_2$ of the largest to smallest (nonzero) eigenvalues of the Laplacian, arguing that the smaller is the ratio the more synchronizable is the network; obviously, this ratio is minimized precisely when all eigenvalues are equal, thus leading us once more to the condition \eqref{MSpectrum}.
Therefore, we will focus on tridiagonal Laplacian matrices that satisfy \eqref{MSpectrum} and since $\sigma^2$ is invariant under multiplication of $L$ by a positive scalar, we will focus on matrices satisfying
\begin{equation}\label{eq:eigenvalues_all_ones}
\lambda_2=\cdots=\lambda_N=1, 
\end{equation}
that is on matrices that have $0$ as a simple eigenvalue and $1$ as an eigenvalue of algebraic multiplicity $N-1$. We now give a complete classification of the tridiagonal Laplacian matrices $L$ that satisfy \eqref{eq:eigenvalues_all_ones} and we note that the structure of $S_{k,N}$ in \eqref{eq:SkN} below includes the structure envisioned in \cite[Theorem 7]{NM2006}.


\begin{theorem}\label{thm:classification_trid}
Let $L\in\R^{N\times N}$ be a tridiagonal Laplacian matrix as in \eqref{eq:laplacian}. Then $L$ has spectrum $\{0,1,\dots,1\}$ if and only if it has the form
\begin{equation}\label{eq:SkN}
\resizebox{0.94\columnwidth}{!}{$\displaystyle
L=S_{k,N}(t)\coloneqq
\bmat{
1 & -1 & 0 & 0 & 0 & 0 & \cdots & 0 & 0 \\
0 & \ddots & \ddots & 0 & 0 & 0 & \cdots & 0 & 0 \\
0 & 0 & 1 & -1 & 0 & 0 & \cdots & 0 & 0 \\
0 & 0 & 0 & t & -t & 0 & \cdots & 0 & 0 \\
0 & 0 & 0 & t-1 & 1-t & 0 & \cdots & 0 & 0 \\
0 & 0 & 0 & 0 & -1 & 1 & \cdots & 0 & 0 \\
\vdots & \vdots & \vdots & \vdots & \vdots & \vdots & \ddots & \ddots & \vdots \\
0 & 0 & 0 & 0 & 0 & 0 & \cdots & -1 & 1
}. $}
\end{equation}
Here $k\in\{1,\dots,N-1\}$, and
\begin{equation}\label{eq:core}
\bmat{
 t & -t \\
 t-1 & 1-t
}, \text{ for some } t\in[0,1],
\end{equation}
is the principal submatrix of $S_{k,N}(t)$ corresponding to rows and columns $k$ and $k+1$. 
\end{theorem}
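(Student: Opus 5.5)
The plan is to reduce to the irreducible case, where the fact recalled just before the theorem does most of the work. An irreducible tridiagonal matrix has real and \emph{distinct} eigenvalues. So an irreducible tridiagonal block with spectrum contained in $\{0,1\}$ has size at most $2$.

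\textbf{Step 1: reduction to irreducible blocks.} I will cut $L$ at every index $i$ with $b_ic_i=0$, i.e.\ wherever the coupling between nodes $i-1$ and $i$ is missing in at least one direction. At each such cut, $L$ is block upper or block lower triangular with respect to the splitting $\{1,\dots,i-1\}\cup\{i,\dots,N\}$. Applying these splittings repeatedly, the characteristic polynomial of $L$ is the product of the characteristic polynomials of the maximal irreducible diagonal blocks $L_1,\dots,L_m$ (consecutive indices).

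Each $L_j$ is a tridiagonal $Z$-matrix whose row sums are $\ge 0$, since the off-diagonal entries dropped by the cuts are $\le 0$. By Gerschgorin and diagonal similarity to a symmetric matrix, each $L_j$ has real nonnegative eigenvalues. If $\sigma(L)=\{0,1,\dots,1\}$, then each $\sigma(L_j)\subseteq\{0,1\}$ with distinct eigenvalues. Hence every $L_j$ is either $1\times 1$ equal to $[0]$ or $[1]$, or $2\times2$ with spectrum $\{0,1\}$. Moreover, exactly one block carries the eigenvalue $0$.

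\textbf{Step 2: the core block.} Let the block carrying $0$ occupy rows $k,k+1$; if it is $1\times1$ at row $r$, take $k=r$ or $k=r-1$ as appropriate. I will show it has the form \eqref{eq:core}. For a $2\times2$ block $\smat{a_k & b_{k+1}\\ c_{k+1} & a_{k+1}}$, impose trace $=1$ and determinant $=0$, together with the row-sum relations $a_k=-c_k-b_{k+1}$ and $a_{k+1}=-c_{k+1}-b_{k+2}$. Block singularity together with the nonnegative row sums should force the outside couplings $c_k$ and $b_{k+2}$ to vanish. Indeed, a $Z$-matrix whose row sums are $\ge0$, with at least one strictly positive, is nonsingular when irreducible. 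So the block has zero row sums, and setting $t=a_k$ gives exactly \eqref{eq:core}. The signs $b_{k+1}=-t\le 0$ and $c_{k+1}=t-1\le0$ give $t\in[0,1]$. The $1\times1$ case $a_r=0$ (a zero row) is absorbed as $t=0$ or $t=1$.

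\textbf{Step 3: the outer rows (main obstacle).} Every other diagonal entry equals $1$, and I must show that rows $i<k$ read $(c_i,a_i,b_{i+1})=(0,1,-1)$ while rows $i>k+1$ read $(-1,1,0)$. I will argue by induction outward from the core.

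Consider first row $k-1$, a $1\times1$ block. It satisfies $c_{k-1}+b_k=-1$, and because it is a singleton block we have $b_kc_k=0$ and $b_{k-1}c_{k-1}=0$. From Step 2, $c_k=0$. The danger is a ``split'' row with both $c_{k-1}$ and $b_k$ nonzero. Such a row forces $b_{k-1}=0$, which makes the block above it decouple downward, leaving a block above with strictly smaller row sums. I will then check, by the same determinant and row-sum argument applied to the top segment $\{1,\dots,k-2\}$, that this forces a second zero eigenvalue. The mechanism is that row $1$ has $a_1=-b_2$, and propagating the deficit upward produces a zero row. This contradicts the simplicity of $0$; equivalently, it contradicts the rooted spanning-tree assumption. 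Hence $c_{k-1}=0$ and $b_k=-1$, and iterating the same argument upward gives the pattern above the core. The symmetric argument handles the rows below $k+1$.

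This bookkeeping is where I expect the real work to lie. The plan is to organize it as ``the segment $\{1,\dots,k-1\}$ is a tridiagonal $Z$-matrix with row sums $\ge0$ and spectrum $\{1,\dots,1\}$, plus $a_1=-b_2$'', and to prove by induction on its length that it must be the upper bidiagonal matrix with $1$ on the diagonal and $-1$ on the superdiagonal.

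\textbf{Converse.} $S_{k,N}(t)$ is block triangular. Its diagonal blocks are $1\times1$ blocks $[1]$ together with the core \eqref{eq:core}, whose trace is $1$ and whose determinant is $t(1-t)+t(t-1)=0$. Hence its spectrum is $\{0,1,\dots,1\}$, and its row sums and signs make it a Laplacian as in \eqref{eq:laplacian}.
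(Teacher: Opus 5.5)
Your argument is correct in substance, but it takes a genuinely different route from the paper. The paper never decomposes $L$. It uses only $\operatorname{tr}(L)=\operatorname{tr}(L^2)=N-1$, together with the row-sum relations, to obtain $\sum_{i=2}^N (c_i+b_i+1)^2+2\sum_{i=2}^{N-1}c_ib_{i+1}=0$. Since $c_ib_{i+1}\ge0$ by the sign constraints, every term vanishes, giving $c_i+b_i=-1$ and $c_ib_{i+1}=0$ at once, and a short case analysis then yields $S_{k,N}(t)$. That route is shorter and needs no irreducibility or diagonal-dominance facts. It also shows that the two trace conditions alone already characterize the class. Your route instead uses the full multiplicity information, through the simplicity of eigenvalues of unreduced tridiagonal matrices: irreducible blocks have size at most $2$, and only the block carrying $0$ can be $2\times2$. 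This gives a transparent structural reason why at most one link can be bidirectional. The block-size bound (at most the number of distinct eigenvalues) also carries over to other prescribed spectra.

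Your Step~3 is much easier than you expect if you induct from the ends instead of outward from the core. Every row outside the core is a singleton block $[1]$, and every link outside the core has $b_ic_i=0$. So row $1$ gives $a_1=1=-b_2$, hence $b_2=-1$ and $c_2=0$. Then $a_2=1=-b_3$, hence $b_3=-1$ and $c_3=0$, and so on up to $b_k=-1$, $c_k=0$; symmetrically from row $N$ upward. With this ordering:
\begin{itemize}
\item no contradiction argument is needed;
\item the diagonal-dominance step in Step~2 becomes unnecessary, since $c_k=b_{k+2}=0$ fall out of the induction;
\item the $1\times1$ zero-row case is covered directly.
\end{itemize}
As written, your outward argument excludes only the split row at $k-1$ before concluding $c_{k-1}=0$, $b_k=-1$. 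The alternative $b_k=0$, $c_{k-1}=-1$ must also be excluded. It is, by the same mechanism: $\{1,\dots,k-1\}$ then decouples with zero row sums, producing a second zero eigenvalue.

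Finally, in the converse, $S_{k,N}(t)$ is not block triangular in its natural ordering. You need the permutation used in the paper, or your own Step~1 factorization.
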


\begin{proof}
A matrix $L$ as in \eqref{eq:laplacian} with eigenvalues $\{0,1,\dots,1\}$, clearly satisfies
$$
\operatorname{tr}(L)=\operatorname{tr}(L^2)=N-1.
$$
Now, it is straightforward to see that: 
\begin{equation*}
\begin{aligned}
\operatorname{tr}(L)
&=\sum_{i=1}^N a_i
=-\sum_{i=2}^{N}(b_i+c_i),\\
\operatorname{tr}(L^2)
&=\sum_{i=1}^N a_i^2+2\sum_{i=2}^N b_i c_i.
\end{aligned}
\end{equation*}

Using the zero row-sum condition from \eqref{eq:laplacian}, we have:
\begin{equation*}
\begin{aligned}
\sum_{i=1}^N a_i^2
={}&b_2^2+\sum_{i=2}^{N-1}(c_i+b_{i+1})^2+c_N^2\\
={}&\sum_{i=2}^{N}(c_i^2+b_i^2)
+2\sum_{i=2}^{N-1}c_i b_{i+1}.
\end{aligned}
\end{equation*}
and therefore:
\begin{equation*}
\begin{aligned}
\operatorname{tr}(L^2)={}&\sum_{i=2}^{N}(c_i^2+b_i^2)
+2\sum_{i=2}^{N-1}c_i b_{i+1}+2\sum_{i=2}^N b_i c_i.
\end{aligned}
\end{equation*}
Now note that:
\begin{flushleft}
\(\displaystyle
\sum_{i=2}^{N}(c_i+b_i+1)^2
=\sum_{i=2}^{N}(c_i^2+b_i^2)+N-1+2\sum_{i=2}^{N}(b_i+c_i)
\)\\[-0mm]
\(\displaystyle
{}+2\sum_{i=2}^{N}b_i c_i
=\operatorname{tr}(L^2)-2\sum_{i=2}^{N-1}c_i b_{i+1}+N-1-2\operatorname{tr}(L)
\)\\[-0mm]
\(\displaystyle
{}=-2\sum_{i=2}^{N-1}c_i b_{i+1}.
\)
\end{flushleft}
That is:
$$
\sum_{i=2}^{N}(c_i+b_i+1)^2+2\sum_{i=2}^{N-1}c_i b_{i+1}=0.
$$
Every term of the sum on the left-hand side of this last expression is nonnegative, so each term must be zero. Hence we have
\begin{equation}\label{eq: relation 1}
c_i+b_i=-1 \quad i=2,\dots,N, 
\end{equation}
and
\begin{equation}\label{eq: relation 2}
c_i b_{i+1}=0\quad i=2,\dots,N-1. 
\end{equation}

Using \eqref{eq: relation 1} and \eqref{eq: relation 2} above, we see that:
\begin{enumerate}
 \item If $b_k=0$, with $2\le k\le N$, then
\begin{equation*}
c_i=-1,\quad a_i=1,\quad b_{i+1}=0,\quad i=k,\ldots,N.
\end{equation*}
 \item If $c_k=0$ with $2\le k\le N$, then
\begin{equation*}
c_i=0,\quad a_i=1,\quad b_{i+1}=-1,\quad i=1,\ldots,k-1.\\
\end{equation*}
 \item If $b_kc_k\ne0$, with $2\le k\le N$, then
\begin{gather*}
c_{k-1}=0,\quad b_{k+1}=0,\\
b_k+c_k=-1,\quad a_{k-1}=-b_k,\quad a_k=-c_k.
\end{gather*}
with the usual agreement that $c_1=b_{N+1}=0$. 
\end{enumerate}
This leaves only the possibility \eqref{eq:SkN} for $L$.

Conversely, consider a matrix $S_{k,N}(t)$ as in \eqref{eq:SkN} and rewrite it as 
\vspace{2\baselineskip}
$$
S_{k,N}=
\begin{bNiceMatrix}[first-col]
{\scriptstyle k-1} \{   & B & D & 0 \\
{\scriptstyle 2} \{     & 0 & C & 0 \\
{\scriptstyle N-k-1} \{ & 0 & E & F
\CodeAfter
  \OverBrace[yshift=1mm]{1-1}{1-1}{{\scriptstyle k-1}}
  \OverBrace[yshift=1mm]{1-2}{1-2}{{\scriptstyle 2}}
  \OverBrace[yshift=1mm]{1-3}{1-3}{{\scriptstyle N-k-1}}
\end{bNiceMatrix}\,.
$$ 
By a permutation similarity, $S_{k,N}$ can be brought to a block upper
triangular form, hence its eigenvalues are those of its diagonal blocks $B$, $C$, $F$, that is they are
$
\{0,1,\ldots,1\}.
$
\end{proof}


\begin{remark}
For $0<t<1$ and $2\le k\le N-2$, the network structure identified by $L$ in \eqref{eq:SkN} is that of a network having two adjacent leaders, namely nodes $k$ and $k+1$, and the coupling between them is bidirectional. The network graph has a source strongly connected component (SSC) consisting of the two leaders with two branches emanating from the SSC and no other branching. There are a total of $N$ edges. At the endpoint  $t=0$, for $k=1$, the network has only one leader and the graph is a directed path; similarly for  $t=1$ and $k=N-1$. 
Otherwise it is a spanning tree with one leader, two branches emanating from it and no farther branching. The graph has $N-1$ edges. These endpoint cases belong to the set of {\sl optimal networks} considered by Nishikawa and Motter in \cite[Theorem 7]{NM2006} and in \cite{NM2010}. 
\end{remark}


We now describe the Jordan block structure of the matrices $S_{k,N}$ in \eqref{eq:SkN} relative to the eigenvalue $1$.
\begin{theorem}\label{thm:Jordan_block_structure_trid}
Let $S_{k,N}(t)$ be as in Theorem~\ref{thm:classification_trid}. If $0<t<1$ and $2\le k\le N-2$, then $S_{k,N}(t)$ has exactly two Jordan blocks associated with the eigenvalue $1$, of sizes
$$
\max\{k,N-k\}
\qquad\text{and}\qquad
\min\{k,N-k\}-1.
$$
If $k=1$ or $k=N-1$, then $S_{k,N}(t)$ has only one Jordan block of size $N-1$ associated with the eigenvalue $1$.

The smallest possible size of the largest Jordan block of $S_{k,N}$ associated to the eigenvalue $1$ is
\begin{equation}\label{SmallestJordan}
	\left\lceil \frac{N}{2} \right\rceil.
\end{equation}
\end{theorem}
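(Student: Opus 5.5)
The plan is to read off the Jordan structure of the eigenvalue $1$ from the ranks of the powers of $M\coloneqq S_{k,N}(t)-I$. The eigenvalue $0$ of $S_{k,N}(t)$ is simple, so it contributes exactly $1$ to $\rank M^j$ for every $j\ge0$. Hence, writing $m_j$ for the number of Jordan blocks of the eigenvalue $1$ of size $\ge j$, we have $\rank M^{j-1}-\rank M^{j}=m_j$. The whole statement then follows from a formula for $\rank M^j$.

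\emph{Step 1: the structure of $M$.} Rows $1,\dots,k-1$ of $M$ contain only a $-1$, in column $i+1$. Rows $k+2,\dots,N$ contain only a $-1$, in column $i-1$. Rows $k$ and $k+1$ both equal $(t-1,\,-t)$, placed in columns $k,k+1$. So $M$ is a pair of shift operators along the two branches, both fed by a rank-one core.

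\emph{Step 2: the number of blocks.} For $0<t<1$ and $2\le k\le N-2$, a direct inspection gives $\rank M=N-2$. Indeed, rows $1,\dots,k-1$ and $k+2,\dots,N$ are independent, supported on columns $2,\dots,k$ and $k+1,\dots,N-1$ respectively, and the core row $(t-1,-t)$ lies in their span. So the nullity is $2$ and there are exactly two blocks. For $k=1$ or $k=N-1$ one of the branches is empty, the same inspection gives $\rank M=N-1$, and there is a single block, necessarily of size $N-1$.

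\emph{Step 3: the sizes (main obstacle).} To get the sizes I would prove
\[
\rank M^j = 1+\bigl(\max\{k,N-k\}-j\bigr)^+ +\bigl(\min\{k,N-k\}-1-j\bigr)^+ ,\qquad j\ge0 .
\]
This is consistent with $j=0$ and $j=1$, and the differences then yield two blocks of the claimed sizes. To prove it I would compute the entries of $M^j$ as weighted walks of length $j$ in the graph. On each branch $M$ acts as a shift toward the core, and the core vector $(t-1,-t)$ propagates outward into both branches simultaneously. Equivalently, I would build Jordan chains explicitly. Take $u=(t,\,t-1)^T$, the $1$-eigenvector of the core block \eqref{eq:core}, placed in positions $k,k+1$. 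Solving $Mx_{j+1}=x_j$ successively extends $u$ along both branches at once. This produces one chain of length $1+\max\{k-1,N-k-1\}=\max\{k,N-k\}$, because the longer branch survives longest. A second chain, supported on the shorter branch and obtained after subtracting the part absorbed by the first, has length $\min\{k,N-k\}-1$. The delicate point is checking that these two chains are independent and together span the generalized eigenspace, which has dimension $N-1$. The dimension count $\max+\min-1=N-1$ closes the argument once independence is shown. I expect this bookkeeping, in particular the role of the coefficients $t$ and $t-1$ (both nonzero for $0<t<1$), to be the main work.

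\emph{Step 4: the minimal size.} The endpoint cases $t\in\{0,1\}$ with $2\le k\le N-2$ reduce to a spanning tree with a single leader at $k$ or $k+1$. There the same rank computation applies with one branch lengthened by one. This gives a largest block of size $\max\{k,N-k\}$ or $\max\{k+1,N-k-1\}$, in either case $\ge\lceil N/2\rceil$. Over all $k$ and $t$, the largest block therefore has size $\max\{k,N-k\}$ or larger, and $k=1$ or $k=N-1$ gives $N-1$. The minimum of $\max\{k,N-k\}$ over $k$ is attained at $k=\lfloor N/2\rfloor$ and equals $\lceil N/2\rceil$, which gives \eqref{SmallestJordan}.
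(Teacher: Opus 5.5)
Your reduction to ranks of powers of $M=S_{k,N}(t)-I$ is the paper's strategy. The paper works with $I_N-S_{k,N}(t)$, permuted to block upper triangular form. Your target formula for $\rank M^j$ agrees with the paper's piecewise formula for $\rank(\tilde A^m)$. Step~4, however, is wrong. For $t=0$, $S_{k,N}(0)$ is the out-tree rooted at node $k$ whose two branches carry $k-1$ and $N-k$ followers. Each branch acts as a nilpotent shift of its own length, so the Jordan blocks for the eigenvalue $1$ have sizes $k-1$ and $N-k$. For $t=1$ the root is node $k+1$ and the sizes are $k$ and $N-k-1$. The largest endpoint block is therefore $\max\{k-1,N-k\}$ or $\max\{k,N-k-1\}$. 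This never exceeds $\max\{k,N-k\}$, and for odd $N\ge5$ it reaches $\lfloor N/2\rfloor<\lceil N/2\rceil$. For example, $N=5$, $k=3$, $t=0$ gives $\rank M=3$ and $M^2=\ones e_3^T$, i.e.\ two blocks of size $2$. This is exactly the paper's remark right after the proof. So your assertion that, over all $k$ and $t$, the largest block is at least $\lceil N/2\rceil$ is false. The minimality claim holds only for $0<t<1$ (together with $k=1,N-1$). There the paper simply minimizes $\max\{k,N-k\}$ over $2\le k\le N-2$ and compares with $N-1$. Drop your endpoint paragraph.

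Step~3 carries the whole content of the theorem, but it is only announced, and the chain construction as written cannot start. The vector $u=te_k+(t-1)e_{k+1}$ is not in $\ker M$; indeed $Mu=-te_{k-1}+(1-t)e_{k+2}$. It is not in $\operatorname{range}M$ either, so ``solving $Mx_{j+1}=x_j$'' from $u$ fails at the first step. The map $M$ pushes vectors away from the core: $Me_j=-e_{j-1}$ for $2\le j\le k-1$, $Me_j=-e_{j+1}$ for $k+2\le j\le N-1$, and $Me_1=Me_N=0$. Hence $\ker M=\operatorname{span}\{e_1,e_N\}$, and $u$ is the top of a chain, not its bottom. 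Iterate forward instead. Because $(t,t-1)^T$ spans the kernel of the core block $\smat{t-1&-t\\t-1&-t}$ of $M$, one gets $M^ju=(-1)^j\bigl(te_{k-j}-(1-t)e_{k+1+j}\bigr)$, dropping terms whose index leaves $\{1,\dots,N\}$. Since $t(1-t)\ne0$, $u,Mu,\dots$ is a chain of length exactly $\max\{k,N-k\}$. The second chain is simply $e_{k-1},\dots,e_1$ if $k\le N-k$, or $e_{k+2},\dots,e_N$ if $k>N-k$; nothing needs to be subtracted. The bottom vectors of the two chains are multiples of $e_1$ and $e_N$ when $k\ne N-k$, and $e_1$ and $\pm(te_1-(1-t)e_N)$ when $k=N-k$. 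These are independent, which forces the $N-1$ chain vectors to be independent and hence a Jordan basis. With this correction your chain argument is a valid and somewhat shorter alternative to the paper's bookkeeping of the off-diagonal blocks $E_m,F_m$ in $\tilde A^m$.
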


\begin{proof}
Let $0<t<1$ and $2\le k\le N-2$, and set
$$
A = A(t) = I_N-S_{k,N}(t),
$$
so it is enough to look at the Jordan structure of $A$ corresponding to the eigenvalue $0$. Observe that $A$ is similar via a permutation to
	
\begin{equation}\label{eq:3x3_block_matrix}
\tilde{A}=\bmat{
J_{k-1} & 0 & E_1\\
0 & J_{N-k-1} & F_1\\
0 & 0 & C
},
\end{equation}
where $J_j$ denotes the $j\times j$ Jordan block
$\smat{
0&1&0&\sdots&0\\
0&0&1&\sddots&\svdots\\
\svdots&\svdots&\sddots&\sddots&0\\
0&0&\sdots&0&1\\
0&0&\sdots&0&0
}$,
and
$C=C(t)=\smat{1-t&t\\ 1-t&t}$, $E_1=
\smat{0&0\\\svdots&\svdots\\0&0\\1&0} \in\R^{(k-1)\times 2}$,
$F_1=\smat{0&0\\\svdots&\svdots\\0&0\\0&1}\in\R^{(N-k-1)\times 2}$.
Next, look at the rank of the powers of $\tilde{A}$. 
For $m\ge1$, we have 
\begin{equation}\label{rankAtilde}
\begin{gathered}
\tilde A^m=\bmat{
J_{k-1}^m & 0 & E_m\\
0 & J_{N-k-1}^m & F_m\\
0 & 0 & C^m},\\[1ex]
\rank(\tilde A^m)
=\rank(J_{k-1}^m)+\rank(J_{N-k-1}^m)
+\rank\left(\bmat{E_m\\F_m\\C}\right).
\end{gathered}
\end{equation}
In \eqref{rankAtilde}, we have
$E_m=
\smat{
0&0\\
\svdots&\svdots\\
0&0\\
1&0\\
1-t&t\\
\svdots&\svdots\\
1-t&t
}\in\R^{(k-1)\times 2}$,
where the vector $\bmat{1&0}$ occupies row $k-m$ in $E_m$ for $1\le m\le k-1$, while $E_m$ has all rows equal to $\bmat{1-t&t}$ for $m\ge k$.
Similarly,
$F_m=
\smat{
0&0\\
\svdots&\svdots\\
0&0\\
0&1\\
1-t&t\\
\svdots&\svdots\\
1-t&t
}\in\R^{(N-k-1)\times 2}$,
where the vector $\bmat{0&1}$ occupies row $N-k-m$ in $F_m$ for $1\le m\le N-k-1$, while $F_m$ has all rows equal to $\bmat{1-t&t}$ for $m\ge N-k$.
Finally, for all $m\ge1$ we have
$C^m=C=
\smat{
1-t&t\\
1-t&t
}$.

Now it is easy to see that:
$$
\rank(J_{k-1}^m)=
\begin{cases}
k-1-m, & 1\le m\le k-2,\\
0, & m\ge k-1,
\end{cases}
$$
$$
\rank(J_{N-k-1}^m)=
\begin{cases}
N-k-1-m, & 1\le m\le N-k-2,\\
0, & m\ge N-k-1,
\end{cases}
$$
$$
\rank\left(\bmat{E_m\\F_m\\C}\right)
=
\begin{cases}
2, & 1\le m\le \max\{k,N-k\}-1,\\
1, & m\ge \max\{k,N-k\}.
\end{cases}
$$

Therefore, from \eqref{rankAtilde}
we have:
\begin{equation*}
\resizebox{0.94\columnwidth}{!}{$\displaystyle
\rank(\tilde A^m)=
\begin{cases}
N-2m, & 1\le m\le \min\{k,N-k\}-1,\\[1mm]
\max\{k,N-k\}-m+1,
& \min\{k,N-k\}\le m\le\max\{k,N-k\}-1,\\[1mm]
1, & m\ge\max\{k,N-k\}.
\end{cases}$}
\end{equation*}
It follows that $\tilde A$, and hence $A$, has exactly two Jordan blocks associated with the eigenvalue $0$, of sizes
$$
\max\{k,N-k\}
\qquad\text{and}\qquad
\min\{k-1,N-k-1\},
$$
as claimed.
The statement for $k=1$ or $k=N-1$ is immediate, since in these cases $S_{k,N}(t)$ is essentially already in Jordan  form, with only one block associated with the eigenvalue $1$, of size $N-1$.


The claim on the smallest possible size of the Jordan block is a simple verification.  By virtue of the above result, one must choose $k$ as close as possible to $N-k$, and this gives
\begin{itemize}
\item if $N$ is even, the optimal choice is $k=N/2$, and the size of the largest block is $N/2$;
\item if $N$ is odd, the optimal choices are $k=(N\pm1)/2$, and the size of the largest block is $(N+1)/2$. 
\end{itemize}
To sum up, the smallest possible size of the largest Jordan block of $S_{k,N}$ associated to the eigenvalue $1$ is indeed given by \eqref{SmallestJordan},
as claimed.
\end{proof}

\begin{remark}\label{rem:endpoint cases}
    The argument used in the proof of the previous theorem allows to cover the endpoint cases $t=0$ and $t=1$. The smallest possible size of the largest Jordan block of $S_{k,N}(0)$ or $S_{k,N}(1)$ associated to the eigenvalue $1$ is
	$\left\lfloor \frac{N}{2} \right\rfloor$.
\end{remark}

\begin{remark}\label{rem:sizeLargestJordanBlock_trid}
As already remarked, having a normalized spread of $0$ optimizes
the synchronizability of the network, but it is the size of the largest Jordan block of the Laplacian matrix that affects the duration of the transient: 
see Section \ref{Transient} and particularly \eqref{TransGrowth}, as well as the results in Section \ref{NumSim}.  Of course, this fact has been observed many times before; e.g., Fish and Sun in \cite{FishSun} experimentally show that networks with larger Jordan blocks (what they call {\emph sensitivity index}) generally take a longer amount of time to reach synchronization, if at all, and in \cite{FB2022} they define an alternative measure to the MSF, what they call Laplacian pseudospectral resilience (LPR), to assess synchronizability.  They further propose a stochastic algorithm aiming to obtain networks with a target sensitivity index, but they cannot a-priori prescribe the network structure.  In our present context of tridiagonal networks minimizing the value of the spread, and 
hence within the family of matrices $S_{k,N}$, 
our Theorem \ref{thm:Jordan_block_structure_trid} rigorously establishes that the smallest possible size of the largest Jordan block of $S_{k,N}$ associated to the eigenvalue $1$ is
$\left\lceil \frac{N}{2} \right\rceil$,
and this is the value guaranteeing the shortest transient within the class of tridiagonal Laplacians minimizing the spread. 
\end{remark}

\begin{remark}
Obviously, for $N\ge 4$, the tridiagonal structure of Theorems \ref{thm:classification_trid} and \ref{thm:Jordan_block_structure_trid} is that of a non-diagonalizable Laplacian.  Again, this by itself is not a surprise.  In an influential series of works, see \cite{NM, NM2006}, Nishikawa and Motter had provided ample evidence that maximally synchronizable networks are necessarily nondiagonalizable and they also argued that they need to have the structure of directed graphs. Our results establish rigorously that -for the class of tridiagonal networks, and in order to minimize both spread and transient length- it is best to have either a graph structure of two leaders,  each having essentially the same number of followers, or a graph structure with one leader and two separate branches with essentially the same number of followers. 
\end{remark}

%

\section{Bandwidth $b\ge 2$ Laplacians with $\lambda_2=\ldots=\lambda_N=1$}\label{BandLap}


It is natural to ask whether 
a characterization like that of Theorem \ref{thm:classification_trid} for tridiagonal Laplacians with zero normalized spread
can be extended to construct zero-normalized-spread Laplacians of arbitrary bandwidth $b\ge2$, and whether doing so provides any benefit with respect to the tridiagonal case. Here, we show that the answer to both questions is affirmative.

\begin{definition}\label{UpperBanded}
We say that a matrix $A=(a_{ij})$ is upper banded with bandwidth $b$ if $a_{ij}=0$ whenever either $i>j$ or $j-i>b$. We say that $A$ is lower banded with bandwidth $b$ if its transpose is upper banded with the same bandwidth.
\end{definition}

\begin{theorem}\label{thm:banded_Laplacians}
Let $b\ge2$ and let $L\in\R^{N\times N}$ be a Laplacian matrix. Assume that, for some $k=1,\ldots,N-b$, the matrix $L$ has the following structure:
\begin{enumerate}[label=\roman*)]
\item the first $k-1$ rows are upper banded with bandwidth $b$, and $L_{ii}=1$ for $i=1,\ldots,k-1$;
\item the last $N-k-b$ rows are lower banded with bandwidth $b$, and $L_{ii}=1$ for $i=k+b+1,\ldots,N$;
\item the remaining rows $k,\ldots,k+b$ form a $(b+1)\times(b+1)$ core block $C$, with no nonzero entries outside columns $k,\ldots,k+b$, and $C$ has spectrum $\{0,1,\ldots,1\}$.
\end{enumerate}
Then $L$ has bandwidth $b$ and spectrum $\{0,1,\ldots,1\}$.
\end{theorem}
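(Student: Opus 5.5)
The plan mirrors the converse half of the proof of Theorem~\ref{thm:classification_trid}: partition $L$ conformally into three row/column groups, show that a symmetric permutation makes it block upper triangular, and read the spectrum off the diagonal blocks. The groups are $I_1=\{1,\dots,k-1\}$, $I_2=\{k,\dots,k+b\}$ and $I_3=\{k+b+1,\dots,N\}$. Write
$$
L=\bmat{B & D & G\\ 0 & C & 0\\ H & E & F}.
$$
The middle block row equals $\bmat{0 & C & 0}$ by iii), since rows $k,\dots,k+b$ have no nonzero entries outside columns in $I_2$.

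Next we locate the zeros in the other two block rows. By i), rows in $I_1$ are upper banded, so $B$ is upper triangular with unit diagonal. Also $L_{ij}=0$ for $j>i+b$ when $i\le k-1$, which gives $j>k-1+b$. Hence the block $G$ (columns $\ge k+b+1$) vanishes. By ii), rows in $I_3$ are lower banded, so $F$ is lower triangular with unit diagonal. Also $L_{ij}=0$ for $j<i-b$ when $i\ge k+b+1$, i.e. for $j<k+1$; thus $H=0$, as columns in $I_1$ are $\le k-1$. Therefore
$$
L=\bmat{B & D & 0\\ 0 & C & 0\\ 0 & E & F}.
$$
A symmetric permutation moving the middle group last gives the block upper triangular matrix $\smat{B & 0 & D\\ 0 & F & E\\ 0&0&C}$. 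So the characteristic polynomial of $L$ factors as $\det(\lambda I-B)\det(\lambda I-F)\det(\lambda I-C)$. This gives spectrum $\{1\}^{k-1}\cup\{1\}^{N-k-b}\cup\{0,1,\dots,1\}$, which is $\{0,1,\dots,1\}$ with total count $N$.

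For the bandwidth claim, rows in $I_1$ and $I_3$ have $|i-j|\le b$ by hypothesis, and rows in $I_2$ have support in columns $k,\dots,k+b$, so $|i-j|\le b$ there too. The main point needing care is the index bookkeeping showing that $G$ and $H$ vanish exactly: the $b$ in the band must match the core size $b+1$. The check above confirms this, including the boundary cases $k=1$ (empty $I_1$) and $k=N-b$ (empty $I_3$), where the corresponding blocks are simply absent.
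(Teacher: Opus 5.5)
Your proof is correct and follows essentially the same route as the paper: a symmetric permutation makes $L$ block upper triangular, and the diagonal blocks are triangular with unit diagonal plus the core $C$. The paper's permutation also reverses the order of the last group of indices, so that $B$ and $F$ merge into a single upper triangular block; this difference is cosmetic. Your explicit index check that $G=H=0$ and your verification of the bandwidth claim supply details the paper leaves implicit.
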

\begin{proof}
Indicating with $e_1,\dots, e_N$, the columns of the identity matrix $I_N$,
the result follows easily by applying the permutation similarity $\tilde{L}=P^TLP$, with
\begin{equation}\label{eq:permutation_banded}
\resizebox{0.94\columnwidth}{!}{$\displaystyle
P=\bmat{
e_1&\ldots&e_{k-1}&e_N&e_{N-1}&\ldots&e_{k+b+1}&e_k&\ldots&e_{k+b}}. $}
\end{equation}
and realizing that $\tilde{L}$ is block upper triangular with two blocks, the first being upper triangular with diagonal all 
entries equal to $1$, and the second being the $(b+1)\times(b+1)$ core.
\end{proof}

\begin{example}\label{example:banded_Laplacians}
For instance, an $(N-3)$-parameter family of 
$N\times N$ Laplacians with $b=2$ from Theorem \ref{thm:banded_Laplacians} is given by
\begin{equation}\label{eq:bandwidth2_example}
\resizebox{0.94\columnwidth}{!}{$\displaystyle
L=\bmat{
1 & -\ell_1 & \ell_1-1 & 0 & 0 & 0 & 0 & 0 & 0\\
 & \ddots & \ddots & \ddots & 0 & 0 & 0 & 0 & 0\\
0 & 0 & 1 & -\ell_{k-1} & \ell_{k-1}-1 & 0 & 0 & 0 & 0\\
0 & 0 & 0 & \frac23 & -\frac13 & -\frac13 & 0 & 0 & 0\\
0 & 0 & 0 & -\frac13 & \frac23 & -\frac13 & 0 & 0 & 0\\
0 & 0 & 0 & -\frac13 & -\frac13 & \frac23 & 0 & 0 & 0\\
0 & 0 & 0 & 0 & \ell_{k+3}-1 & -\ell_{k+3} & 1 & 0 & 0\\
0 & 0 & 0 & 0 & 0 & \ddots & \ddots & \ddots & 0\\
0 & 0 & 0 & 0 & 0 & 0 & \ell_N-1 & -\ell_N & 1
}. $}
\end{equation}
where
$0\le \ell_i\le 1$, $i=1,\ldots,k-1,\ k+3,\ldots,N$,
and for simplicity the $3\times3$ core block has been chosen to be
$C=I_3-\frac13\bmat{1\\1\\1}\bmat{1&1&1}$,
though $C$ could have been any
$3\times 3$ Laplacian matrix with eigenvalues $\{0, 1, 1\}$.
\end{example}

\begin{remark}
Note that it is not true (unlike the tridiagonal case of Theorem \ref{thm:classification_trid}) that Theorem \ref{thm:banded_Laplacians} is an
``if and only if'' statement.  In fact, it is not true that every bandwidth-$b$ Laplacian with eigenvalues $\{0,1,\ldots,1\}$ must be of the form given by Theorem \ref{thm:banded_Laplacians}. E.g., the matrix
$L=
\smat{
\frac34 & -\frac58 & -\frac18 & 0\\
-\frac1{16} & \frac34 & -\frac12 & -\frac3{16}\\
-\frac3{16} & -\frac12 & \frac34 & -\frac1{16}\\
0 & -\frac18 & -\frac58 & \frac34
}
$
is a bandwidth-$2$ Laplacian with spectrum $\{0,1,1,1\}$, but it does not have the structure described by Theorem \ref{thm:banded_Laplacians}.
\end{remark}

\begin{remark}\label{RemBand}
Similarly to Theorem \ref{thm:Jordan_block_structure_trid},
for the matrices constructed in Theorem \ref{thm:banded_Laplacians}, the Jordan structure associated with the eigenvalue $1$ is governed by the upper/lower banded blocks and by the core. Assume that
$2\le k\le N-b-1$, so that neither upper/lower block is empty. 
If the first superdiagonal entries in the upper banded block and the first subdiagonal entries in the lower banded block are all nonzero, then these blocks contribute one Jordan block each, of sizes
$$
k-1,
\qquad
N-k-b,
$$
respectively. For large $N\gg b$, the contribution of the core to the size of the largest Jordan block is negligible, and this size is given by
$\max\{k-1,\;N-k-b\}$.
This quantity is minimized by choosing $k-1$ as close as possible to $N-k-b$, which gives
$$\left\lceil\frac{N-b-1}{2}\right\rceil,
$$
which, again for $N\gg b$, returns the same value $\left\lceil\frac{N}{2}\right\rceil$ we found in 
\eqref{SmallestJordan}
for the tridiagonal case.
\end{remark}

Remark \ref{RemBand} shows that, in general, increasing the bandwidth $b$ does not lead to a smaller largest Jordan block. To achieve this, one must make a suitable choice of the upper and lower banded portions of $L$ in Theorem \ref{thm:banded_Laplacians}.

\begin{theorem}\label{thm:optimal_banded_Laplacians}
Let $L\in\RNN$ be a bandwidth $b\ge2$ Laplacian satisfying the assumptions in \emph{i)}, \emph{ii)}, \emph{iii)} of Theorem \ref{thm:banded_Laplacians}.
Assume moreover that the core block $C$ is diagonalizable and has no zero rows, and that
\begin{enumerate}[label=\roman*)]
\item $L_{i,i+1}=\ldots=L_{i,i+b-1}=0$, $i=1,\ldots,k-1$;
\item $L_{i,i-1}=\ldots=L_{i,i-b+1}=0$, $i=k+b+1,\ldots,N$.
\end{enumerate}
Then, the size of the largest Jordan block of $L$ associated with the eigenvalue $1$ is
$$
\max\left\{
\left\lceil\frac{k+b-1}{b}\right\rceil,
\left\lceil\frac{N-k}{b}\right\rceil
\right\}.
$$
\end{theorem}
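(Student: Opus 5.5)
The plan is to compute the nilpotency index of $A=I_N-L$ on the generalized eigenspace of $L$ for the eigenvalue $1$, which is the generalized kernel of $A$. This index equals the size of the largest Jordan block.

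\emph{Reading off the rows.} Hypotheses i) and ii), together with $L_{ii}=1$, nonpositive off-diagonal entries and zero row sums, determine the outer rows completely.
\begin{itemize}
\item For $i\le k-1$, the only nonzero off-diagonal entry of row $i$ is $L_{i,i+b}=-1$.
\item For $i\ge k+b+1$, the only nonzero off-diagonal entry is $L_{i,i-b}=-1$.
\end{itemize}
Hence $(A\x)_i=x_{i+b}$ for $i\le k-1$, and $(A\x)_i=x_{i-b}$ for $i\ge k+b+1$.

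\emph{Block form of $A$.} I apply the permutation similarity \eqref{eq:permutation_banded} of Theorem~\ref{thm:banded_Laplacians}. This gives
$$\tilde A=\bmat{\mathcal N & M\\ 0 & P},\qquad P=I_{b+1}-C .$$
Here $\mathcal N$ is strictly upper triangular and acts as a shift by $b$ along index chains $i\to i+b\to\cdots$. Because $C$ is diagonalizable with spectrum $\{0,1,\dots,1\}$, $P$ is a rank-one idempotent, $P^2=P$. The columns of $M$ record the last step of each chain into a core index.

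\emph{Power formula.} From the block form one gets
$$\tilde A^m=\bmat{\mathcal N^m & \mathcal N^{m-1}M+\Big(\sum_{j=0}^{m-2}\mathcal N^j\Big)MP\\ 0 & P}.$$
The generalized kernel is $\ker\tilde A^{N}$.

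\emph{Chain lengths.} An index $i\le k-1$ needs $\lceil (k-i)/b\rceil$ steps of $+b$ to reach a core index. The worst case is $i=1$, giving $d_u=\lceil (k-1)/b\rceil$. Symmetrically, for indices $i\ge k+b+1$ the worst case is $i=N$, giving $d_\ell=\lceil (N-k-b)/b\rceil$. Set $D=\max\{d_u,d_\ell\}$. Then $\mathcal N^{D}=0$ and $\mathcal N^{D-1}\ne0$. The target value is $D+1$, since $d_u+1=\lceil (k+b-1)/b\rceil$ and $d_\ell+1=\lceil (N-k)/b\rceil$.

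\emph{Upper bound.} Take $\x=(\x',\x'')$ in the generalized kernel. Then $P\x''=0$, because $P$ is invertible on its range. For $m=D+1$ the formula gives $\tilde A^{D+1}\x=\mathcal N^{D}M\x''=0$. The term $\sum_j\mathcal N^jMP\x''$ vanishes because $P\x''=0$, and the top-left term vanishes because $\mathcal N^{D+1}=0$. So no Jordan block exceeds $D+1$.

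\emph{Lower bound.} Suppose, say, $d_u=D$. I want $\x''\in\ker P=\operatorname{range}(C)$ such that $\mathcal N^{D-1}M\x''\neq0$. This requires $x''_j\neq0$ at the specific core column $j$ that the longest chain (starting at $i=1$) lands on. Then $\x=(0,\x'')$ lies in the generalized kernel and satisfies $\tilde A^{D}\x\ne0$, so some Jordan block has size at least $D+1$. The case $d_\ell=D$ is handled the same way.

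The main obstacle is this last step: showing that $\operatorname{range}(C)$ is not contained in the coordinate hyperplane $\{x_j=0\}$. My first attempt will be this. If $\operatorname{range}(C)\subseteq\{x_j=0\}$, then $e_j^TC=0$, so row $j$ of $C$ is zero. That is exactly what the hypothesis that $C$ has no zero rows excludes.

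I also need to check two bookkeeping points:
\begin{itemize}
\item The landing column $j$ is indeed a core column in the permuted ordering.
\item For the lower chains the bandwidth-$b$ offset is counted correctly, so that $d_\ell+1=\lceil (N-k)/b\rceil$ holds exactly.
\end{itemize}
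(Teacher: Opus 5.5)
Your proof is correct. It shares the paper's starting point: the permutation similarity \eqref{eq:permutation_banded}, which puts $A=I_N-L$ in block upper triangular form with shift-by-$b$ nilpotent blocks and the rank-one idempotent $I-C$. The key computation, however, is different.

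The paper mirrors the proof of Theorem~\ref{thm:Jordan_block_structure_trid}. It sketches how $\rank(\tilde A^m)$ decreases as $m$ grows and reads the largest block off the rank sequence. You never compute ranks. Instead you determine the nilpotency index of $\tilde A$ on its generalized kernel directly:
\begin{itemize}
\item the upper bound $D+1$ comes from your closed form for $\tilde A^m$, together with $\mathcal N^{D}=0$ and $P\x''=0$;
\item the lower bound comes from the explicit witness $(0,\x'')$, with $\x''\in\ker P=\operatorname{range}(C)$ and $x''_j\neq0$ at the landing index $j$ of the longest chain.
\end{itemize}

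A fully worked rank count would give more, namely the whole Weyr characteristic and hence all block sizes. But it needs careful bookkeeping at the last power before the shift blocks vanish, where the rank drop need not be $2b$ or $b$. For instance, for $S^2_{4,9}$ the ranks of $\tilde A^m$, $m=0,1,2,3$, are $9,5,2,1$. The paper's sketch also does not say where each core hypothesis is used.

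Your argument proves exactly the claimed statement and separates the two hypotheses cleanly. Diagonalizability gives $P^2=P$, so the core adds at most one step. The absence of zero rows in $C$ gives $\operatorname{range}(C)\not\subseteq\{x_j=0\}$, so the core adds at least one step.

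The bookkeeping points you flagged also check out:
\begin{itemize}
\item $1+d_ub\in\{k,\dots,k+b-1\}$ and $N-d_\ell b\in\{k+1,\dots,k+b\}$ are core indices;
\item $\lceil x\rceil+1=\lceil x+1\rceil$ gives $d_\ell+1=\lceil (N-k)/b\rceil$;
\item in the degenerate case $D=0$ (that is, $L=C$) no witness is needed.
\end{itemize}
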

\begin{proof}
The proof follows closely the proof of Theorem \ref{thm:Jordan_block_structure_trid}, and therefore we only sketch it. Through the permutation similarity in \eqref{eq:permutation_banded}, $A=I_N-L$ is similar to a block upper triangular matrix $\widetilde A$, 
whose diagonal blocks are two nilpotent blocks and the block $I-C$, where $C$ is the core. The extra zero conditions \emph{i)}, \emph{ii)} cause each nilpotent block to lose rank by $b$ at each power of $\widetilde A$. Hence, as $m$ increases by one, the rank of $\widetilde A^m$ decreases by $2b$ while both nilpotent blocks are nonzero, then by $b$ once only one of them is nonzero, and finally by one before stabilizing, due to the hypothesis on the core $C$ (diagonalizability and no zero rows).
\end{proof}

\begin{example}\label{example:S492}
For instance, choosing $N=9$, $k=4$, and $\ell_i=0$ for $i=1,2,3,7,8,9$ in Example \ref{example:banded_Laplacians}, we obtain the following bandwidth-$2$ Laplacian:
\begin{equation}\label{eq:Skn2}
\resizebox{0.8\columnwidth}{!}{$\displaystyle
L=\bmat{1 & 0 & -1 & 0 & 0 & 0 & 0 & 0 & 0\\
0 & 1 & 0 & -1 & 0 & 0 & 0 & 0 & 0\\
0 & 0 & 1 & 0 & -1 & 0 & 0 & 0 & 0\\
0 & 0 & 0 & \frac23 & -\frac13 & -\frac13 & 0 & 0 & 0\\
0 & 0 & 0 & -\frac13 & \frac23 & -\frac13 & 0 & 0 & 0\\
0 & 0 & 0 & -\frac13 & -\frac13 & \frac23 & 0 & 0 & 0\\
0 & 0 & 0 & 0 & -1 & 0 & 1 & 0 & 0\\
0 & 0 & 0 & 0 & 0 & -1 & 0 & 1 & 0\\
0 & 0 & 0 & 0 & 0 & 0 & -1 & 0 & 1
}. $}
\end{equation}
In this case of \eqref{eq:Skn2}, the largest Jordan block associated with the eigenvalue $1$ has size
$$
\max\left\{
\left\lceil\frac{5}{2}\right\rceil,
\left\lceil\frac{5}{2}\right\rceil,
1
\right\}=3.
$$
\end{example}

In Theorem \ref{thm:optimal_banded_Laplacians}, the assumption that the core is diagonalizable  significantly simplifies the statement of the theorem without limiting its relevance. It is also consistent with the goal of minimizing the size of the largest Jordan block. For this reason, we define the following class of Laplacians.  
\begin{definition}[bandwidth-$b$ Laplacian]\label{def:bandwith-b-Laplacian}
We say that a Laplacian $L$ as in Theorem \ref{thm:optimal_banded_Laplacians} is a bandwidth-$b$ Laplacian, written as $S_{k,N}^b$, if 
\begin{equation}\label{eq:SkNb}
C = I_{b+1}-\frac{1}{b+1}
\bmat{1\\ \vdots\\ 1}\bmat{1&\cdots&1}.
\end{equation}
\end{definition}

\begin{remark} For example, 
$L$ in Example \ref{example:S492} is $L=S_{4,9}^2$.
\end{remark}

We now show that the matrices $S_{k,N}^b$ are the best one can get in terms of Jordan structure to minimize the length of the transient.

\begin{theorem}\label{thm:lower_bound_Jordan_blocks_banded}
Let $L\in\R^{N\times N}$ be a  bandwidth $b\ge2$ Laplacian matrix, hence with $\sigma(L)=\{0,1,\ldots,1\}$.
Then, the size of the largest Jordan block of $L$ associated with the eigenvalue $1$ cannot be smaller than
$$
\left\lceil \frac{N-1}{2b} \right\rceil.
$$
If, in addition, $L$ does not have any zero row, then the size of the largest Jordan block of $L$ associated with the eigenvalue $1$ cannot be smaller than
$$
\left\lceil \frac{N}{2b} \right\rceil.
$$
\end{theorem}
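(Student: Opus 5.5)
Set $A=I_N-L$. Since $\sigma(L)=\{0,1,\ldots,1\}$ with $0$ simple, $A$ has eigenvalue $1$ simple and eigenvalue $0$ of algebraic multiplicity $N-1$. Let $m$ be the size of the largest Jordan block of $L$ for the eigenvalue $1$, i.e. of $A$ for the eigenvalue $0$. Then $A^m$ annihilates the generalized null space of $A$, which has dimension $N-1$, so $\rank(A^m)=1$. We will combine this with a bandwidth argument on $A^m$: entries of $A^m$ vanish whenever $|i-j|>mb$. A rank-one matrix with that band structure must be supported on a small set of rows and columns, and this forces $N$ to be small compared with $2mb$.

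\textbf{Key steps.} First write $A^m=uv^T$, with $u$ and $v$ the right and left eigenvectors of $A$ for the eigenvalue $1$. Since $L\ones=0$ we have $A\ones=\ones$, so we may take $u=\ones$; normalize $v^T\ones=1$. Then $A^m=\ones v^T$, so every row of $A^m$ equals $v^T$, with $v\neq 0$.

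Next use the bandwidth $b$ of $L$, hence of $A$, to get $(A^m)_{ij}=0$ for $|i-j|>mb$. Row $1$ of $A^m$ is $v^T$, so $v_j=0$ for $j>1+mb$. Row $N$ is also $v^T$, so $v_j=0$ for $j<N-mb$. Since $v\ne0$, some index $j$ satisfies $N-mb\le j\le 1+mb$. Hence $N-1\le 2mb$, i.e. $m\ge\lceil (N-1)/(2b)\rceil$.

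\textbf{Sharpening under the no-zero-row assumption.} Equality $N-1=2mb$ forces the support of $v$ to be the single index $j_0=1+mb$. Then $A^m=\ones e_{j_0}^T$, and the identity $A\cdot A^m=A^m$ gives $A\ones e_{j_0}^T=\ones e_{j_0}^T$, which carries no new information. So instead use $A^m A=A^m$, that is, $e_{j_0}^TA=e_{j_0}^T$. This says row $j_0$ of $A$ is $e_{j_0}^T$, so row $j_0$ of $L=I-A$ is zero, contradicting the hypothesis. Therefore $N-1<2mb$, i.e. $N\le 2mb$, giving $m\ge\lceil N/(2b)\rceil$.

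\textbf{Main obstacle.} The step needing care is this extremal case: one must confirm that the only way to reach $N-1=2mb$ is a one-point support of $v$, and that the left-eigenvector identity $v^TA=v^T$ then pins down a zero row of $L$. The first part follows from the two interval constraints $j\ge N-mb$ and $j\le 1+mb$ collapsing to a single point. The second part is exactly $v^TA=v^T$ with $v=e_{j_0}$. I would also check that the rank-one claim does not need diagonalizability: it only uses that $A$ restricted to its generalized $0$-eigenspace is nilpotent of index $m$, together with the simple eigenvalue $1$.
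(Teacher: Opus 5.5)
Your proposal is correct and follows essentially the same route as the paper: $A^m=\ones v^T$ has identical rows, the bandwidth $mb$ of $A^m$ confines the support of $v$ to $\{N-mb,\ldots,mb+1\}$, and in the one-point case $v=e_{j_0}$ forces row $j_0$ of $L$ to vanish. The only cosmetic difference is that you identify $v$ from the outset as the normalized left eigenvector of $A$ (so $e_{j_0}^TA=e_{j_0}^T$ is immediate), whereas the paper deduces $e_j^TA=e_j^T$ from the fact that $A,A^2,\ldots,A^m$ share the same simple left eigenspace for the eigenvalue $1$.
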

\begin{proof}
Set
$A=I_N-L$, so that 
$
\sigma(A)=\{0,\ldots,0,1\}$, and 
$A\ones=\ones$,
where $\ones$ is the vector of all 1's. 
Let $m\ge1$ be the size of the largest Jordan block of $L$ associated with the eigenvalue $1$, and so
$
\operatorname{rank}(A^m)=1$.  Then, 
there exists a vector $w\in\mathbb{R}^N$, with
$\sum_{j=1}^N w_j=1$, such that
$A^m=\ones w^T$.

Since $L$ has bandwidth $b$, also $A=I_N-L$ has bandwidth $b$, and therefore $A^m$ has bandwidth $mb$. On the other hand, all rows of $A^m$ are equal to $w^T$. Hence, in order for $A^m$ to have bandwidth $mb$ we must have
$$
w_j=0\quad\text{if}\quad j<N-mb \quad\text{or}\quad j>mb+1.
$$
Therefore $w_j$ can be nonzero only for
$
j\in\{N-mb,\ldots,mb+1\}$.
But since $w\neq0$, this set must be nonempty, which gives
$mb+1\ge N-mb$, and this implies
$$
m\ge \left\lceil\frac{N-1}{2b}\right\rceil.
$$
Now, suppose that no row of $L$ is zero. We now show that the set
$$
\{N-mb,\ldots,mb+1\}
$$
cannot contain exactly one integer. Indeed, if this happened, then $w=e_j$ for some $j$, and hence
$
A^m=\ones e_j^T$, and in particular,
$
e_j^T A^m=e_j^T$.  But, 
since $A,A^2,\ldots,A^m$, all have the same left eigenspace associated with the eigenvalue $1$, it follows that
$e_j^T A=e_j^T$.  
Equivalently, the $j$-th row of $A=I_N-L$ is $e_j^T$, and therefore the $j$-th row of $L$ is zero, but this is not possible. Hence the set $\{N-mb,\ldots,mb+1\}$ must contain at least two integers. Therefore
$mb\ge N-mb$,
that is
$
m\ge \left\lceil\frac{N}{2b}\right\rceil$ as claimed.
\end{proof}


\begin{remark}
Theorem \ref{thm:lower_bound_Jordan_blocks_banded} shows that, with the appropriate choice of $k$, the matrices $S_{k,N}^b$, are optimal in the following sense: choosing $k-1$ as close as possible to $N-k-b$, that is placing the core as centrally as possible, $S_{k,N}^b$ attains the minimal possible size of the largest Jordan block associated with the eigenvalue $1$ among all $N\times N$ bandwidth-$b$ Laplacians of Definition \ref{def:bandwith-b-Laplacian}.
\end{remark}

\section{Numerical experiments}\label{NumSim}
In this section, we provide numerical simulations of networks 
for different choices of the Laplacians. 
We will use the following Laplacians: 
\begin{itemize}
\item[$L_{(i)}$] $=\lambda S_{1,N}(0)$, where $S_{1,N}(0)$ is the tridiagonal Laplacian matrix in Theorem \ref{thm:classification_trid} with $t=0$. The first agent is the unique leader while the other agents are followers. The scalar $\lambda$ is chosen so that the MSF is negative.  
\item[$L_{(ii)}$] $=\lambda S_{k,N}(\frac 12)$, where $S_{k,N}(\frac 12)$ is the tridiagonal Laplacian matrix in Theorem \ref{thm:classification_trid} with $t=\frac 12$ and symmetric core $C=I_2 -\frac 12 \begin{pmatrix} 1 & 1 \\ 1 & 1 \end{pmatrix}$ in rows and columns $k$, $k+1$. 
\item[$L_{(iii)}$] $=\lambda S_{k,N}^b$, for $b=2,3$, where $S_{k,N}^b$  is the Laplacian in Theorem \ref{thm:optimal_banded_Laplacians}. We take $\ell_i=0$ for all $i$ and choose the core $C$ as in \eqref{eq:SkNb}.
\item[$L_{(iv)}$] $=T$ as derived in \cite[Algorithm 2, Theorem 2.4]{DiElPu1}. Here, the matrix $T$ is 
tridiagonal and irreducibile so that is has $N$ simple eigenvalues 
$0=\lambda_1<\lambda_2<\lambda_3<\ldots<\lambda_N$, with $\lambda_2,\dots, \lambda_N$ chosen in an interval where the MSF is  negative. 
\end{itemize}

For each problem, and unless otherwise stated, we compute the MSF with the continuous QR method; see \cite{DV2002}. Together with the MSF, we also compute the greatest interval of the Exponential Dichotomy spectrum; see \cite{DV2002}. When this is negative, the system is uniformly asymptotically stable.  
For integrating
\eqref{Network2}, we use a fixed stepsize Runge Kutta method of order 4 and stepsize $h=2^{-6}$, unless otherwise specified. 

In order to measure the distance of solutions from the synchronous space, we use 
\begin{equation*}
\begin{aligned}
d(t)&=\left\|\left(I_{nN}-\frac1N
(\e\e^T\otimes I_n)\right)\x(t)\right\|_\infty\\
& = \max_{1\le i\le N}\|\x_i(t)-\frac{1}{N}\sum_{j=1}^N\x_j(t)\|_\infty \\
\e&=\smat{1\\\svdots\\1}\in\R^N, \\
\end{aligned}
\end{equation*}
that is we look at the 
sup-norm of the projection of the component of the solution into $\S^\perp$, at all $t$.  
 
\subsection{Lorenz system}\label{LorSim}

We consider a network of $N$ agents that satisfy the Lorenz equations
\begin{equation}\label{eq:lorenz}
\dot{\bf y} =
\begin{pmatrix}
10(y_2-y_1) \\
y_1(28-y_3)-y_2 \\
y_1y_2-2y_3
\end{pmatrix}.
\end{equation} 
In \eqref{Network2}, for the componentwise interaction, we take 
the matrix 
$E=\begin{pmatrix} 0 & 1 & 0 \\ 0 & 0 & 0 \\ 0 & 0 & 0 \end{pmatrix}$, Laplacian matrix any of $L_{(i)}, \,\ \ldots L_{(iv)}$ above, and simulate the network for different values of $k$, $N$, $b$ and $\lambda$. Our simulations clearly show how the transient length, in the cases $L_{(i)}, L_{(ii)}, L_{(iii)}$, depends on the size of the largest Jordan block and on the asymptotic convergence rate. 
In the case $L_{(iv)}$, which is diagonalizable, the transient length is impacted by the condition number of the eigenvector matrix of $T$. 
Interestingly, choosing $L_{(iv)}$ gives a transient length similar to, or shorter than, the case $L_{(i)}$, in spite of the fact that the network associated to $L_{(i)}$ is optimal insofar as the eigenvalues' spread is concerned, while the network associated to $T$ is not optimal.

The MSF for \eqref{eq:lorenz} has been computed many times before, 
(see \cite[Figure 2]{Huang}). 
We remark that the MSF does not depend 
on $L$. For our choice of $E$, the MSF is stable and it is negative in an open interval containing  $[4.25,\,22.5]$. In the same interval, the exponential dichotomy spectrum is contained in the negative real half-line, and hence the linearized system is uniformly exponentially stable.
For our simulations we take random initial conditions in a neighborhood of radius $0.1$ of a synchronous initial condition on the Lorenz attractor.
We use two values of $\lambda$, namely $\lambda=5$ and $\lambda=12$. The corresponding values of the MSF are $\Lambda(5) \simeq -0.14$ and $\Lambda(12) \simeq -2.34$. 
When the Laplacian is equal to $T$ in $L_{(iv)}$, we choose $\lambda_2, \ldots ,\lambda_N$, equal to Chebyschev points of the first kind translated and scaled in the interval $[\lambda -0.5 , \,\ \lambda+0.5]$.
For $\lambda=12$, we couple $N=512$ and $N=1024$ agents. 
In Figure \ref{Lorenz_fig} we plot the distance from the synchronous space for the different Laplacians. The dependence of the transient time on the sensitivity index of $\lambda$ is evident in the plots. 
\begin{figure}
    \centering
    \includegraphics[width=\columnwidth]{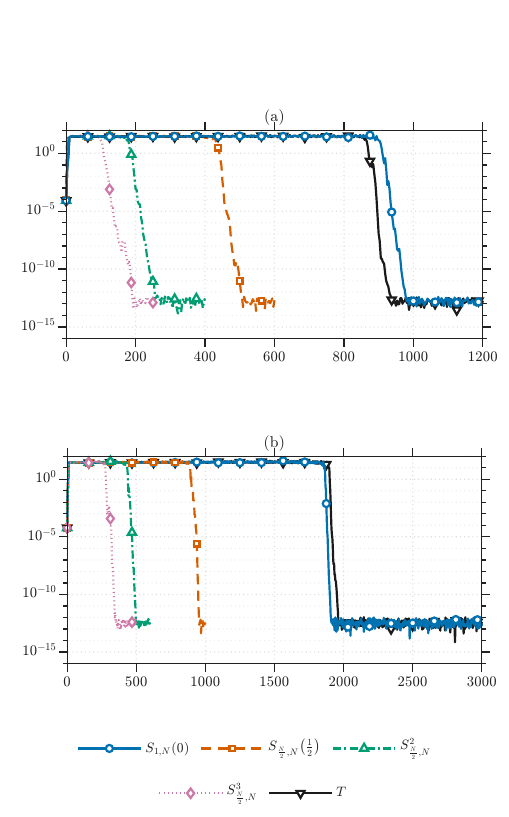}
\caption{Lorenz system \eqref{eq:lorenz}. Plot with Laplacian matrices $T$, $\lambda S_{1,N}(0)$, 
$\lambda S_{\frac N2 ,N}(\frac 12)$, $\lambda S_{\frac N2 N}^{2}$ and 
$\lambda S_{\frac N2 N}^{3}$  for $\lambda=12$.  
 Panel (a): $N=512$. Panel (b): $N=1024$. }
\label{Lorenz_fig} 
\end{figure}
 
For $\lambda=5$ the convergence rate is much slower, hence in 
Figure \ref{Lorenz_fig2} we consider only $N=256$ agents. On the left we plot the distance from the synchronous 
space for networks with Laplacian matrices $S_{1,N}(0)$, $S_{\frac N2, N}(\frac 12)$ and $S_{\frac N4 ,N}(\frac 12)$. 
In the plot, we can observe the linear dependence of the transient length on the dimension of the greatest Jordan block.
On the right, for the Laplacian matrix $S_{\frac N2,N}(\frac 12)$, we compare the two different convergence rates 
for $\lambda=12$ and $\lambda=5$, making it evident how 
the transient length depends on the convergence rate; see \eqref{TransGrowth}. 
\begin{figure}[!t]
\centering
\includegraphics[width=\columnwidth]{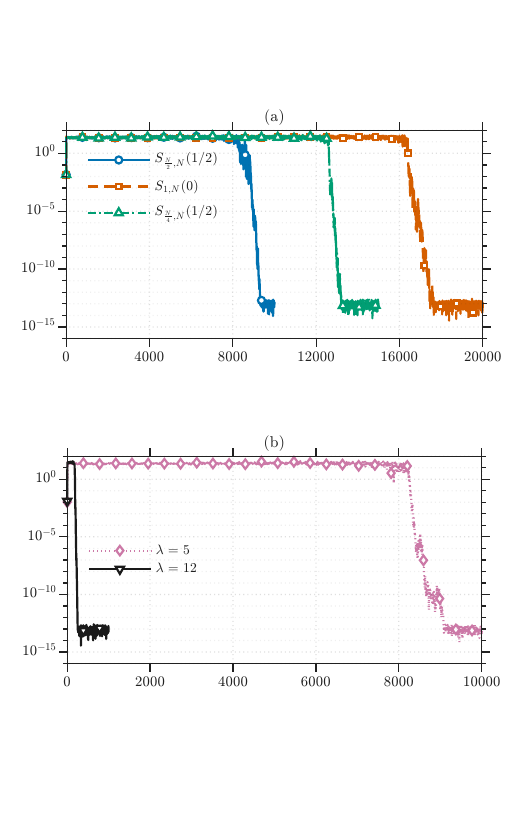}
\caption{Lorenz system \eqref{eq:lorenz} with $N=256$ agents. Panel (a): Laplacian matrices $S_{1,N}(0)$, $S_{\frac N2,N}(\frac12)$, and $S_{\frac N4,N}(\frac12)$ for $\lambda=5$. Panel (b): $S_{\frac N2,N}(\frac12)$ for $\lambda=12$ and $\lambda=5$.}
\label{Lorenz_fig2}
\end{figure}

\subsection{Lorenz '96}
We couple $N$ agents, each satisfying the Lorenz '96 equations
\begin{equation}\label{eq:Lorenz96}
\begin{aligned}
\frac{d y_i}{dt}&=(y_{i+1}-y_{i-2})y_{i-1}-y_i+F,\\
i&=0,\dots,n-1\pmod n.
\end{aligned}
\end{equation} 
and we select 
$F=8$ and $n=12$ and $n=40$.  For both
$n=12$ and $n=40$ the system has a chaotic attractor with respectively three and thirteen positive Lyapunov exponents. There is more than one expanding direction along the chaotic orbit and we want to test convergence to the synchronous space in this case.  
We computed the Lyapunov exponents of the network for different choices of the componentwise interaction matrix $E$ in \eqref{Network2}, and in order to 
find intervals giving negative MSF in \eqref{SynchR}, we choose $E$ as follows: $E_{ii}=1$, for $i$ odd, $E_{ij}=0$ otherwise. In Figure \ref{MSF_Lorenz96} we plot in function of $\eta$ both the greatest Lyapunov exponent of \eqref{Linear} and the rightmost interval of the exponential dichotomy spectrum.
For $n=12$, the Lyapunov exponent is negative  
in an open interval containing $[4.1, \,\ 32.9]$, while for $n=40$, it is negative in an open interval containing $[5.1, \,\ 22.5 ]$. 
\begin{figure}[!t]
\centering
\includegraphics[width=\columnwidth]{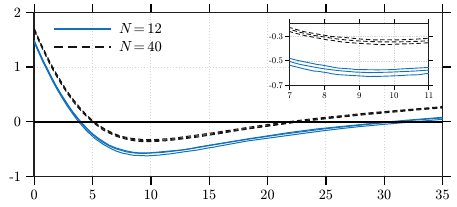}
\caption{Lorenz '96 system \eqref{eq:Lorenz96}. Largest Lyapunov exponent of \eqref{Linear} and rightmost interval of the exponential dichotomy spectrum interval as functions of $\eta$. The upper and lower curves represent the endpoints of the exponential dichotomy spectral interval, while the middle curve is the MSF. }
\label{MSF_Lorenz96}
\end{figure}
Given the size of the single agent, we simulate a network of $N=48$ agents for $n=12$, and a network with $N=24$ for $n=40$. We use the Laplacians in $L_{(i)}$, \ldots, $L_{(iv)}$, with $\lambda=9$. For $\eta=9$, in the case $n=12$, the Lyapunov exponent is $\Lambda(\eta) \simeq -0.58$, while for $n=40$,  
$\Lambda(\eta) \simeq -0.33$. 
For $\eta=9$, both linearized systems are uniformly asymptotically stable.  
When we choose $T$ as in $L_{(iv)}$ the eigenvalues are Chebychev points of first kind translated in $[8.5, \,\ 9.5]$. 
We plot the distance from the synchronous subspace in Figure \ref{Lorenz96}. From the two plots, we see that after an initial 
transient, the distance from the synchronous space decreases with the speed of the greatest Lyapunov exponent, a fact highlighted by the plot of $e^{\Lambda(\eta)t}$ (solid black line in the plots).
The oscillatory decay of the distance to the synchronous subspace is probably due to
the existence of several expanding directions.
\begin{figure}[!t]
\centering
\includegraphics[width=\columnwidth]{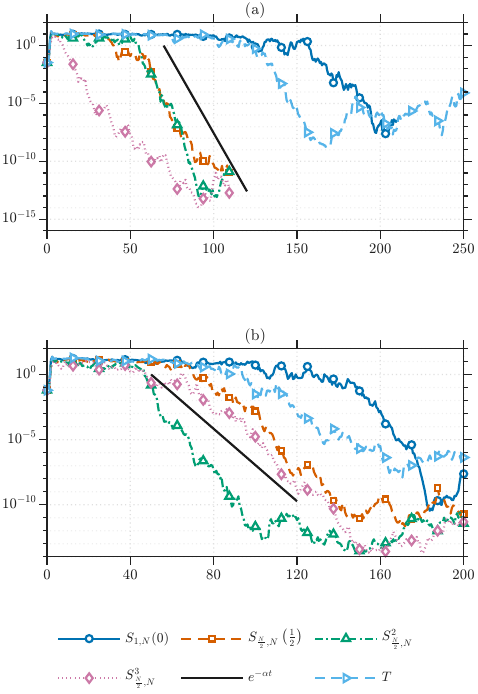}
\caption{Lorenz '96 system \eqref{eq:Lorenz96} for different choices of the Laplacian matrix. The exponential convergence compares well with the largest Lyapunov exponent. Panel (a): $N=48$, $n=12$, $\alpha=0.58$. Panel (b): $N=24$, $n=40$, $\alpha=0.33$.}
\label{Lorenz96}
\end{figure}


\subsection{Non smooth mechanical oscillator}\label{sec:nonsmooth_mechanical}

This last example is that of a network of planar non-smooth mechanical oscillators with dry friction. The single agent obeys the piecewise smooth system 
\begin{equation}\label{eq:nonsmooth-mech-agent}
\begin{aligned}
\dot \y&=\begin{cases}
\f^+(\y),&y_2>v,\\
\f^-(\y),&y_2<v,
\end{cases}\\[1mm]
\f^+(\y)&=\bmat{
y_2\\[1mm]
-y_1-\dfrac{1}{1+3(y_2-v)}},\\[1mm]
\f^-(\y)&=\bmat{
y_2\\[1mm]
-y_1+\dfrac{1}{1-3(y_2-v)}}.
\end{aligned}
\end{equation}
with parameter $v=0.15$.

%
On the discontinuity hyperplane $\Sigma=\{y\in\R^2:\ y_2-v=0\}$, we use Filippov vector field; so doing, the single agent has a periodic orbit with a sliding segment on $\Sigma$.  As in \cite{DiEl2023}, we compute the MSF using the
non-smooth single-agent equation while for direct simulations of networks with
several agents we integrate the regularized system \eqref{eq:nonsmooth-mech-regularized} below (see \cite{DiElLo24}):
\begin{equation}\label{eq:nonsmooth-mech-regularized}
\begin{aligned}
\f_\varepsilon(\y)={}&\alpha_\varepsilon(y_2-v)\f^+(\y)\\
&+\bigl(1-\alpha_\varepsilon(y_2-v)\bigr)\f^-(\y).
\end{aligned}
\end{equation}
where
\begin{equation}\label{eq:nonsmooth-mech-alpha}
    \alpha_\varepsilon(z)=
    \begin{cases}
        1, & z\ge \varepsilon,\\
        0, & z\le -\varepsilon,\\
        \dfrac12+\dfrac{z}{4\varepsilon}\left(3-\dfrac{z^2}{\varepsilon^2}\right),
        & |z|<\varepsilon,
    \end{cases}
\end{equation}
$z=y_2-v$. We use the coupling matrix $E=\begin{bmatrix}0&0\\1&0\end{bmatrix}$ and
the resulting network \eqref{Network} is
\begin{equation}\label{eq:nonsmooth-mech-network}
\begin{aligned}
\dot\x_i=&\f_\varepsilon(\x_i)
+\sum_{j=1}^N a_{ij}E(\x_j-\x_i),\\
&i=1,\ldots,N.
\end{aligned}
\end{equation}
Since the single agent has a periodic orbit, for this example it is easier to compute the Floquet multiplier of largest modulus (we will refer to it as the Floquet mutliplier below) instead of the largest Lyapunov exponent. 

For the chosen coupling matrix $E$, we compute the Floquet multiplier from the
non-smooth variational problem along the periodic orbit of the single agent following
\cite{DiElLo24}. The values of $\eta$ for which the multiplier lies in $(-1,1)$
predict transverse stability of the synchronous periodic orbit. In this example there
is a union of disjoint intervals on which the Floquet multiplier has modulus smaller
than $1$, i.e., on which the MSF is negative; see Fig.~\ref{fig:GalvanettoE1}.
In the computations below we take $L=\lambda S_{k,N}^{b}$ with $\lambda=2.1$,
which lies inside an interval giving negative MSF.
\begin{figure}[!t]
\centering
\includegraphics[width=\columnwidth]{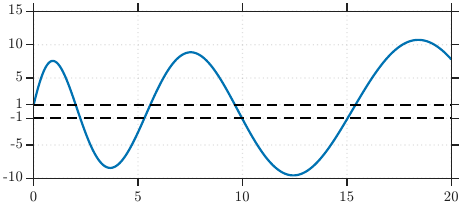}
\caption{Floquet multiplier for the non-smooth mechanical oscillator \eqref{eq:nonsmooth-mech-agent} as a function of $\eta$.}
\label{fig:GalvanettoE1}
\end{figure}

The network integration is performed with the regularized vector field \eqref{eq:nonsmooth-mech-regularized}, with the integrator {\tt ode45} from {\tt Matlab} using absolute and relative error tolerances of $10^{-12}$.  Initial values are chosen at distance $\rho$ from the sliding segment of the periodic orbit of the single agent.

We fix $k=N/2$ and compare $b=1,2,3$, for $N=32$ and regularization parameter $\epsilon=0.1$. 
As we can see from Figure \ref{fig:GalvaN32kN2}, for larger bandwidths of the Laplacian, the numerical solution converges to the synchronous subspace for larger values of $\rho$, indicating that for this example the basin of attraction of the synchronous orbit is larger for larger bands. We also consider the coupling matrix $T$ as in $L_{iv}$ with distinct eigenvalues chosen in the following union
$[2, \,\ 2.2] \cup [5.3, \,\ 5.5] \cup [9.6, \,\ 9.9] \cup [15, \,\ 15.3]$. The MSF is negative for $\eta$ in the chosen intervals.   
As we can see from the plot, in our simulations we can take the radius of convergence for Laplacian equal to $T$ larger than the one for Laplacians $S_{N/2,N}$ and 
$S_{N/2,N}^2$. 
\begin{figure}[ht]
    \centering
\includegraphics[width=\columnwidth]{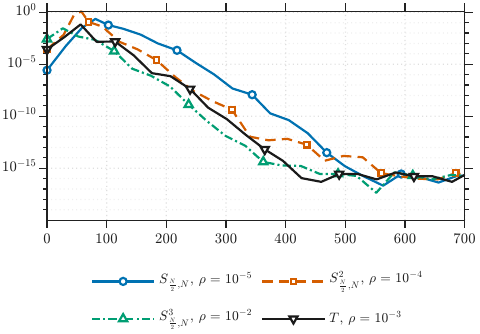}
\caption{Regularized network \eqref{eq:nonsmooth-mech-network}, $N=32$, simulations with matrices 
$S_{\frac N2, N}^b$, for $b=1,2,3$ and matrix $T$ with distinct eigenvalues. 
The regularization parameter is $\epsilon=0.1$.}
\label{fig:GalvaN32kN2}
\end{figure}

\section{Conclusions}\label{Concl}
In this work we gave rigorous results of optimality of network structure for the cases of tridiagonal, and banded, Laplacian matrices, so to enhance  synchronizability.  Our goal has been to  find suitable Laplacians whose Jordan canonical form minimizes both the spread and the transient length. Our results provide a firm mathematical foundation for optimizing the connections in a network of a given (tridiagonal or banded) structure.  Extensive numerical experiments validate our results.

\end{document}